\title{$R(3,10) \leq 41$}
\author{Vigleik Angeltveit}
\address{Mathematical Sciences Institute \\
Australian National University \\
Canberra, ACT 0200 \\
Australia}
\newtheorem{theorem}{Theorem}[section]
\newtheorem{lemma}[theorem]{Lemma}
\newtheorem{cor}[theorem]{Corollary}
\newtheorem{prop}[theorem]{Proposition}
\theoremstyle{definition}
\let\c@equation\c@theorem
\numberwithin{equation}{section}
 \newcommand{\cR}{\mathcal{R}}        
\begin{document}

\begin{abstract}
We improve the upper bound on the Ramsey number $R(3,10)$ from $42$ to $41$. Hence $R(3,10)$ is equal to $40$ or $41$.
\end{abstract}

\maketitle

\section{Introduction}
The Ramsey number $R(s,t)$ is defined to be the smallest $n$ such that every graph of order $n$ contains either a clique of $s$ vertices or an independent set of $t$ vertices. See \cite{Ra94} for a survey on the currently known bounds for small Ramsey numbers. The smallest Ramsey numbers that are currently unknown are $R(4,6)$ and $R(3,10)$.

\begin{theorem} \label{t:main}
The Ramsey number $R(3,10)$ is less than or equal to $41$.
\end{theorem}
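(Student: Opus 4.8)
The plan is to argue by contradiction: suppose $G$ is a triangle-free graph on $41$ vertices with no independent set of size $10$ (a $(3,10)$-graph, in the usual terminology that a $(3,s)$-graph is triangle-free with $\alpha \le s-1$), and derive a contradiction. The first observation is purely local. Since $G$ is triangle-free, for every vertex $v$ the neighborhood $N(v)$ is an independent set, so $\deg(v) \le \alpha(G) \le 9$. Dually, the set $M_v = V(G) \setminus N[v]$ of non-neighbors induces a triangle-free graph whose independent sets cannot be enlarged by $v$, whence $\alpha(G[M_v]) \le 8$; so $G[M_v]$ is a $(3,9)$-graph, and using the known value $R(3,9) = 36$ it has at most $35$ vertices. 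This forces $\deg(v) = 40 - |M_v| \ge 5$. Thus every degree lies in $\{5,6,7,8,9\}$, and I would record the crude edge-count bounds $5\cdot 41 \le 2e(G) \le 9\cdot 41$, to be refined below.

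Next I would exploit the fact that for each $v$ the induced graph $G[M_v]$ is not merely small but is one of the finitely many $(3,9)$-graphs of order $40 - \deg(v)$, all of which have been enumerated in the course of establishing $R(3,9)=36$. Peeling a closed neighborhood therefore attaches $G$ to a known catalogue: $G$ is recovered from a $(3,9)$-graph $H$ of order $40-d$ by adding an independent set $N(v)$ of size $d$, a vertex $v$ joined to all of $N(v)$, and a set of edges between $N(v)$ and $V(H)$ that keeps the result triangle-free and $\alpha$-respecting. I would translate the global condition $\alpha(G)\le 9$ into constraints on these connecting edges: every independent set $I$ of $H$ of size $8$ must have a neighbor in $N(v)$, while no vertex of $N(v)$ may close a triangle or spawn an independent set of size $10$. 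The aim is to show that no consistent extension exists, for any admissible $H$ and any degree $d$.

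To make this tractable I would organize the argument as a carefully pruned exhaustive search rather than a closed-form deduction. The engine is a gluing procedure: fix a vertex $v$, enumerate the admissible $H=G[M_v]$ from the stored $(3,9)$ and, one level deeper, $(3,8)$ catalogues, attach the cone over $N(v)$ in all ways compatible with the triangle-free and $\alpha\le 9$ constraints, and reject isomorphic duplicates. Global counting identities would serve as feasibility cuts to kill partial configurations before they are completed; the most useful is the cherry count $\sum_v \binom{\deg v}{2} = \sum_{\{x,y\}\notin E} \lambda(x,y)$, relating the degree sequence to the common-neighbor multiplicities $\lambda(x,y)$ of non-adjacent pairs, together with sharp bounds on the edge counts of the $(3,9)$-graphs being glued.

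The main obstacle is exactly this combinatorial explosion: the catalogue of $(3,9)$-graphs on up to $35$ vertices is large, and the number of ways to attach a cone of degree at most $9$ is enormous, so a naive search is hopeless. Making it terminate requires (i) propagating the $\alpha\le 9$ constraints aggressively, so that partial attachments are pruned as early as possible; (ii) fixing a maximum-degree vertex and using the extremal structure of the densest $(3,9)$-graphs to limit what can be glued; and (iii) a reliable isomorph-rejection scheme so the search tree does not revisit equivalent graphs. I expect essentially all of the difficulty—and nearly all of the running time—to concentrate in the degree-$9$ and degree-$8$ cases, where $M_v$ is smallest and the connecting edges have the most freedom.
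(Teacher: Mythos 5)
Your overall strategy---delete a closed neighbourhood $N[v]$, observe that $N(v)$ is independent of size $d\in\{5,\dots,9\}$ and that $G[M_v]$ is a $(3,9)$-graph on $40-d$ vertices, and then show by an exhaustive gluing search that no admissible extension exists---is exactly the paper's strategy. But there is a genuine gap that would make your search infeasible as described: you propose to enumerate ``the admissible $H=G[M_v]$ from the stored $(3,9)$ catalogues'' on up to $35$ vertices. No such catalogue exists and none can be computed: the full sets $\cR(3,9,n)$ for $n$ around $30$--$32$ are astronomically large (the paper notes that even the single edge-restricted family $\cR(3,9,30,e=99)$ dwarfs everything it had to handle, and $\cR(3,9,32,e\le 112)$ alone already contains about $1.6$ billion graphs). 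Your mention of ``sharp bounds on the edge counts of the $(3,9)$-graphs being glued'' gestures at the missing ingredient but never supplies it. The paper's crucial step (Proposition \ref{p:R39graphs}) is a counting argument showing that some vertex $v$ of $\Gamma$ has dual neighbourhood in one of just three edge-bounded families, $\cR(3,9,32,e\le 112)$, $\cR(3,9,33,e\le 121)$, $\cR(3,9,34,e\le 130)$: since $41$ is odd, $\Gamma$ is not $9$-regular, so a vertex of degree at most $8$ exists; writing $N=n_8+2n_7+3n_6+4n_5$, the blow-up $\Gamma'$ that replaces each degree-$i$ vertex by $9-i$ copies is triangle-free on $N$ vertices, hence has a vertex of degree at most $\frac{N-1}{2}$, and this translates directly into the required upper bound on $e(\Gamma_v^-)$. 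Without this lemma (or something equivalent) you cannot restrict attention to a computable set of base graphs, and the search you describe never gets off the ground.

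Two smaller points. First, your compatibility condition is stated too weakly: it is not enough that every independent $8$-set of $H$ meet the neighbourhood of some $v_i$; since the $v_i$ are pairwise non-adjacent, an independent $j$-set of $H$ avoiding the neighbourhoods of $10-j$ or more of the $v_i$ already yields an independent $10$-set, so one needs, for every $K\subseteq\{1,\dots,d\}$, that $\alpha\bigl(VH\setminus\bigcup_{k\in K}S_k\bigr)\le 9-|K|$. Second, you say the difficulty concentrates where $M_v$ is smallest (degrees $8$ and $9$), but you do not get to choose $d$: the point of the proposition is precisely to guarantee that a vertex with a \emph{sparse} dual neighbourhood exists, and the bulk of the computation then lives in $\cR(3,9,32,e\le 112)$, which in turn forces a long chain of edge-restricted partial censuses of $\cR(3,8)$ and $\cR(3,7)$ that your outline does not account for.
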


In \cite[Theorem 1]{Ex89b}, Exoo proved that $R(3,10) \geq 40$ by explicitly constructing an $R(3,10,39)$-graph. This means that $R(3,10)$ is either $40$ or $41$. The proof of Theorem \ref{t:main} uses extensive computer calculations, and can be thought of as a follow-up to \cite{GoRa13} where Goedgebeur and Radziszowski proved that $R(3,10) \leq 42$.

The project described in \cite{GoRa13} took a total of about 50 CPU years, and lowering the upper bound on $R(3,10)$ from 42 to 41 requires several orders of magnitude more calculations. For example, Goedgebeur and Radziszowski had to consider about 80 million $R(3,8)$-graphs and we had to consider approximately 150 billion such graphs. Despite this, we were able to complete the proof of Theorem \ref{t:main} in about 3 CPU years, and all of the calculations were done on two standard desktop computers over a period of a few months.

To complete this project, the main challenge was to come up with algorithms that are several orders of magnitude faster than those used in \cite{GoRa13} to go with the $4$ orders of magnitude more graphs.

\subsection{Acknowledgements}
The author would like to thank Brendan McKay for doing some sanity checks on some of the sets of graphs generated in this project and for commenting on an early draft. In particular, he spent some CPU time verifying that the census of $\cR(3,9,32,e \leq 112)$ graphs is complete. I would also like to thank Jan Goedgebeur for some useful comments and for sharing a file with $43$ $117$ $868$ $R(3,10,39)$-graphs he and Radziszowski found in \cite{GoRa13}.

This project also used the software package \emph{nauty} extensively. See \cite{McPi14}.

\section{Outline of proof}
Let $\cR(s,t,n)$ denote the set of isomorphism classes of Ramsey graphs of type $(s,t)$ with $n$ vertices. Similarly, let $\cR(s,t,n,e \leq e_0)$ denote the set of such graphs with at most $e_0$ edges. Given a hypothetical graph $\Gamma$ in $\cR(3,10,41)$, the basic idea is to consider a vertex $v$ of degree $d$. Then the neighbourhood of $v$ is an independent set of size $d$, while the dual neighbourhood is a graph $G \in R(3,9,41-d-1)$. We can then use a version of the Neighbourhood Gluing Extension Method from \cite{GoRa13} to reconstruct $\Gamma$, and it suffices to show that this kind of gluing does not produce any output.

\begin{prop} \label{p:R39graphs}
Any graph $\Gamma$ in $\cR(3,10,41)$ must have a vertex $v$ with dual neighbourhood in one of the following sets:
\begin{enumerate}
 \item $\cR(3,9,32,e \leq 112)$
 \item $\cR(3,9,33,e \leq 121)$
 \item $\cR(3,9,34,e \leq 130)$
\end{enumerate}
\end{prop}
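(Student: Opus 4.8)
The plan is to locate the good vertex through degree bounds, a single edge-counting identity, and an averaging argument. First I would pin down the degree of every vertex of $\Gamma$. Since $\Gamma$ is triangle-free, the neighbourhood $N(v)$ of any vertex $v$ is independent, and as $\Gamma$ has no independent set of size $10$ this gives $\deg v \le 9$. Dually, $\{v\}$ together with any independent set in the dual neighbourhood $G_v$ is independent in $\Gamma$, so $G_v$ lies in $\cR(3,9,40-\deg v)$; because $R(3,9)=36$, the set $\cR(3,9,m)$ is empty once $m\ge 36$, forcing $40-\deg v\le 35$, i.e.\ $\deg v\ge 5$. Thus $5\le\deg v\le 9$ for every $v$, and the three target classes $\cR(3,9,32)$, $\cR(3,9,33)$, $\cR(3,9,34)$ are exactly the dual neighbourhoods of vertices of degree $8$, $7$, $6$.

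Next I would record the edge identity. Sorting the edges of $\Gamma$ by their position relative to $v$ (the $\deg v$ edges at $v$, the edges from the independent set $N(v)$ into $G_v$, and the edges inside $G_v$) gives
\[
 e(G_v) = e(\Gamma) - \sum_{u\in N(v)}\deg u .
\]
Writing $\varepsilon_u = 9-\deg u\ge 0$ and using $\sum_{u\in N(v)} 9 = 9\deg v$, the desired bound $e(G_v)\le 184-9\deg v$ (note $184-9\cdot 8=112$, $184-9\cdot 7=121$, $184-9\cdot 6=130$) is equivalent to
\[
 \sum_{u\in N(v)}\varepsilon_u \;\le\; 184-e(\Gamma).
\]
Since every degree is at most $9$ we have $e(\Gamma)\le\lfloor 9\cdot 41/2\rfloor = 184$, so the right-hand side equals $\tfrac12(E-1)$, where $E=\sum_v\varepsilon_v = 369-2e(\Gamma)$ is the total degree deficiency; in particular it is nonnegative once $E\ge 1$.

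With this reformulation the goal becomes: find $v$ with $\deg v\in\{6,7,8\}$ and neighbour-deficiency $\sum_{u\in N(v)}\varepsilon_u\le\tfrac12(E-1)$. Two clean reductions get most of the way. A parity count shows a degree-$\{6,7,8\}$ vertex must exist at all: if every vertex had degree $5$ or $9$, then with $a$ vertices of degree $5$ and $b$ of degree $9$, $a+b=41$, the sum $2e(\Gamma)=5a+9b=205+4b$ would be odd, which is impossible. And averaging the identity $\sum_v\sum_{u\in N(v)}\varepsilon_u=\sum_u\varepsilon_u\deg u = 9E-\sum_u\varepsilon_u^2$ over all $41$ vertices shows that, whenever $E\ge 2$, some vertex meets the deficiency bound $\tfrac12(E-1)$.

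The hard part is making these two facts coincide, i.e.\ guaranteeing that a vertex meeting the edge bound can be chosen of degree $6$, $7$, or $8$ rather than $5$ or $9$. The degree-$9$ vertices (deficiency $0$) and degree-$5$ vertices (deficiency $4$) are precisely the ones that can distort a uniform average, so I would localise the averaging to the candidate set and control the contribution of these extreme vertices using structural information about the small dual neighbourhoods — in particular the highly constrained class $\cR(3,9,35)$ attached to degree-$5$ vertices and $\cR(3,9,31)$ attached to degree-$9$ vertices, together with the known edge-count distributions for $\cR(3,9,m)$, $31\le m\le 35$. I expect this case analysis — ruling out the configuration in which every degree-$\{6,7,8\}$ vertex has too many low-degree neighbours — to be the main obstacle, and the step most likely to require the $R(3,9)$ enumeration data, and possibly direct computation, rather than a purely counting argument.
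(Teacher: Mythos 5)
Your setup is correct and matches the paper's exactly: the degree bounds $5\le \deg v\le 9$, the identity $e(G_v)=e(\Gamma)-\sum_{u\in N(v)}\deg u$, and the reformulation that $v$ works if and only if $\sum_{u\in N(v)}\varepsilon_u\le\tfrac12(E-1)$ with $E=\sum_v\varepsilon_v=369-2e(\Gamma)$ are all the same as in the paper (where $E$ is called $N$ and the neighbour-deficiency is called $\epsilon(v)$). But the step you defer --- guaranteeing that a vertex meeting the deficiency bound can be taken of low degree --- is precisely the content of the proposition, and your plan for it (localised averaging plus ``structural information'' about $\cR(3,9,31)$ and $\cR(3,9,35)$, possibly requiring enumeration data or computation) is a genuine gap: as you note, the unweighted average over all $41$ vertices can be achieved by a degree-$9$ vertex, whose dual neighbourhood is not on the list, and you do not supply an argument that closes this.

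The missing idea is elementary and purely combinatorial. Weight each vertex by its deficiency: form the blow-up $\Gamma'$ with $\varepsilon_v=9-\deg v$ copies of each vertex $v$ (so degree-$9$ vertices contribute nothing), copies of $v$ and $w$ adjacent iff $v\sim w$ in $\Gamma$. Then $\Gamma'$ is triangle-free, has $E$ vertices with $E$ odd, and each copy of $v$ has degree exactly $\sum_{u\in N(v)}\varepsilon_u$. A triangle-free graph on an odd number $E$ of vertices has a vertex of degree at most $\tfrac12(E-1)$ (the endpoints of any edge have disjoint neighbourhoods; equivalently, apply Mantel to the $\varepsilon$-weighted average $\sum_v\varepsilon_v\,\epsilon(v)=2e(\Gamma')\le E^2/2$). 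Every vertex of $\Gamma'$ is a copy of a vertex of $\Gamma$ of degree in $\{5,6,7,8\}$, so the averaging automatically lands in the candidate set. This also exposes a second omission in your write-up: the vertex so obtained may have degree $5$, whose dual neighbourhood lies in $\cR(3,9,35,e\le 139)$; the paper disposes of this case not by further analysis but by citing Goedgebeur--Radziszowski's result that $\cR(3,9,35,e\le 139)=\emptyset$, a fact you would also need to invoke, since your parity argument only shows that some vertex of degree $6$, $7$ or $8$ exists, not that the low-deficiency vertex is one of them. In short, the step you expect to be the main obstacle and to require computation has a two-line counting resolution, and without it (or the weighted substitute) your argument does not yet prove the statement.
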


\begin{proof}
First, $\cR(3,9,35,e \leq 139)$ should be on that list but by \cite[Table 14]{GoRa13} the set of such graphs is empty.
Since $R(3,9) = 36$, every vertex of $\Gamma$ must have degree in $\{5,6,7,8,9\}$. Because 41 is odd, $\Gamma$ cannot be regular of degree 9 and must have at least one vertex of degree in $\{5,6,7,8\}$.

This result makes intuitive sense, since if $\Gamma$ has $40$ vertices of degree $9$ and a single vertex $v$ of degree $8$ then the dual neighbourhood of $v$ is in $\cR(3,9,32,e \leq 112)$. If we start removing edges from $\Gamma$, the dual neighbourhood of $v$ will still be in one of these sets.

Let $n_i$ denote the number of vertices of degree $i$ in $\Gamma$.

First suppose $n_5=n_6=n_7=0$, so every vertex of $\Gamma$ has degree $8$ or $9$. Then $\Gamma$ has $\frac{369-n_8}{2}$ edges, and the dual neighbourhood of a degree $8$ vertex $v$ has $\frac{369-n_8}{2} - \sum\limits_{i=1}^8 d(w_i)$ edges, where the sum is over the $8$ vertices in $\Gamma$ adjacent to $v$. Because the induced subgraph of $\Gamma$ on the degree $8$ vertices is triangle-free and $n_8$ is odd, one of the degree $8$ vertices is adjacent to at most $\frac{n_8-1}{2}$ other degree $8$ vertices. Hence the sum of the degree of the neighbours of $v$ is at least $72 - \frac{n_8-1}{2}$.

It follows that the number of edges in the dual neighbourhood $\Gamma_v^{-}$ of that degree $8$ vertex $v$ is
\[
 e(\Gamma_v^{-}) = \frac{369-n_8}{2} - \sum\limits_{i=1}^8 d(w_i) \leq \frac{369-n_8}{2} - (72-\frac{n_8-1}{2}) = 112.
\]
Hence we have established that if every vertex of $\Gamma$ has degree $8$ or $9$ then the dual neighbourhood of some vertex of $\Gamma$ is in $\cR(3,9,32,e \leq 112)$.

For the general case, first note that $\Gamma$ has $\frac{369-N}{2}$ edges, where $N = n_8 + 2n_7 + 3n_6 + 4n_5$. For a vertex $v$ of degree at most $8$, let
\begin{multline*}
 \epsilon(v) = |\textnormal{degree 8 neighbours of $v$}| + 2|\textnormal{degree 7 neighbours of $v$}| \\
 + 3|\textnormal{degree 6 neighbours of $v$}| + 4|\textnormal{degree 5 neighbours of $v$}|.
\end{multline*}
Then the dual neighbourhood of $v$ is in one of the sets in the lemma if and only if $\epsilon(v) \leq \frac{N-1}{2}$.

Again we use that the induced subgraph of $\Gamma$ on the degree at most $8$ vertices is triangle-free: Consider the graph $\Gamma'$ with $9-i$ vertices $\{v^j\}_{j=1}^{9-i}$ for each vertex $v$ of $\Gamma$ of degree $i$ for $i \in \{5,6,7,8\}$, with $v^j$ and $w^k$ adjacent whenever $v$ and $w$ are adjacent in $\Gamma$ (and no edges between $v^j$ and $v^k$ for fixed $v$). Then $\Gamma'$ has $N$ vertices and is also triangle-free. It follows that some vertex in $\Gamma'$ has degree at most $\frac{N-1}{2}$. The degree of $v^j$ in $\Gamma'$ is equal to $\epsilon(v)$, and the result follows.
\end{proof}

\begin{cor}
To prove Theorem \ref{t:main}, it suffices to glue the graphs in Proposition \ref{p:R39graphs} using the Neighbourhood Gluing Extension Method.
\end{cor}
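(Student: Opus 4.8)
The plan is to argue by contradiction and reduce the non-existence of a graph in $\cR(3,10,41)$ to the claim that the gluing procedure produces no output. First I would record the elementary equivalence that $R(3,10) \leq 41$ is the same as the statement $\cR(3,10,41) = \emptyset$: a triangle-free graph on $41$ vertices with no independent set of size $10$ is exactly a graph in $\cR(3,10,41)$, and the existence of such a graph would witness $R(3,10) > 41$. Thus it suffices to show that no such $\Gamma$ exists.

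Suppose for contradiction that $\Gamma \in \cR(3,10,41)$. By Proposition \ref{p:R39graphs}, $\Gamma$ has a vertex $v$ whose dual neighbourhood $G = \Gamma_v^{-}$ lies in one of the three listed sets. Here I would verify the structural facts that make $G$ a legitimate input to the gluing: since $\Gamma$ is triangle-free, the neighbourhood $N(v)$ is an independent set of size $d = \deg(v)$, and $G$, being an induced subgraph of $\Gamma$, is triangle-free on $40 - d$ vertices. Moreover $G$ has independence number at most $8$, since any independent set of size $9$ among the non-neighbours of $v$, together with $v$ itself, would be an independent set of size $10$ in $\Gamma$. Hence $G \in \cR(3,9,40-d)$, and the three sets of the proposition correspond precisely to $d \in \{8,7,6\}$.

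The key step is the observation that $\Gamma$ is completely determined by the triple consisting of $v$, the independent set $N(v)$, and the dual neighbourhood $G$, together with the bipartite data recording which vertices of $G$ are joined to which vertices of $N(v)$. The Neighbourhood Gluing Extension Method runs over exactly this data: for each $G$ in the relevant sets it enumerates all ways of attaching a fresh independent set $N(v)$ and a vertex $v$ so that the resulting graph is triangle-free and has no independent set of size $10$. Provided the gluing is \emph{exhaustive} — that it produces every graph of $\cR(3,10,41)$ admitting a vertex $v$ with $\Gamma_v^{-} = G$ — running it over all $G$ in the three sets of Proposition \ref{p:R39graphs} produces every graph in $\cR(3,10,41)$. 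Therefore, if the gluing produces no output on any of these inputs, then $\cR(3,10,41) = \emptyset$, which is Theorem \ref{t:main}.

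The conceptual content of the corollary is thus a short reduction, and the genuine obstacle is computational rather than logical. The main difficulty I anticipate is twofold: first, one must establish that the gluing procedure is genuinely exhaustive, since any gap in completeness would invalidate the implication "no output $\Rightarrow$ no graph''; and second, one must actually carry the search out over all three sets, a computation several orders of magnitude larger than the one in \cite{GoRa13}. Designing the gluing and its pruning so that this enormous search terminates in feasible time while remaining provably complete is where essentially all of the real work lies.
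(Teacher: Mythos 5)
Your proposal is correct and follows exactly the paper's (implicit) argument: the corollary is an immediate consequence of Proposition \ref{p:R39graphs} together with the observation, already made in the paper's outline, that any $\Gamma \in \cR(3,10,41)$ is reconstructed from a vertex $v$, the independent set $N(v)$, and the dual neighbourhood $G$ via the exhaustive gluing, so no output implies $\cR(3,10,41) = \emptyset$. Your extra details (the correspondence $d \in \{6,7,8\}$ with the three sets, and why $G$ is a valid $\cR(3,9,40-d)$ input) are accurate and simply make explicit what the paper leaves to the reader.
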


This leaves us with two things to do:
\begin{enumerate}
 \item Generate all the graphs in Proposition \ref{p:R39graphs}.
 \item Show that none of them extend to an $\cR(3,10,41)$-graph.
\end{enumerate}

\section{A census of graphs}
To generate the graphs in Proposition \ref{p:R39graphs}, we start with a partial census of $\cR(3,7)$ and work our way up.

\subsection{A partial census of $\cR(3,7)$}
We used the following sets of graphs:
\begin{enumerate}
 \item $\cR(3,7,16,e \leq 24)$ (46 514 graphs)
 \item $\cR(3,7,17,e \leq 30)$ (3 131 580 graphs)
 \item $\cR(3,7,18,e \leq 36)$ (23 149 358 graphs)
 \item $\cR(3,7,19,e \leq 41)$ (2 173 527 graphs)
 \item $\cR(3,7,20,e \leq 46)$ (10 613 graphs)
\end{enumerate}

These are not that difficult to compute, and we computed them using a one-point extender that takes as input an $R(3,7,n)$-graph and outputs all ways to add a single vertex to produce an $R(3,7,n+1)$-graph. It is possible to get away with only using $\cR(3,7,16,e \leq 21)$ rather than $\cR(3,7,16,e \leq 24)$, but see Section \ref{s:R3824e63} for a place where we found it convenient to use the larger set of graphs.

\subsection{A partial census of $\cR(3,8)$}
We need the following sets of graphs:
\begin{enumerate}
 \item $\cR(3,8,23,e \leq 53)$ (238 854 716 graphs)
 \item $\cR(3,8,24,e \leq 63)$ (approximately 150 billion graphs)
 \item $\cR(3,8,25,e \leq 70)$ (2 120 846 970 graphs)
 \item $\cR(3,8,26,e \leq 77)$ (1 767 543 graphs)
 \item $\cR(3,8,27)$ \hspace{29pt} (477 142 graphs)
\end{enumerate}

The graphs in $\cR(3,8,27)$ are not logically necessary, but we found it convenient to glue them as well in order to make an additional assumption about the minimum degree of any vertex for the remaining gluing operations.

In each case it is a small linear programming exercise to show that gluing the above $R(3,7)$-graphs suffices to generate these $R(3,8)$-graphs.

We remark that we do not have an accurate count of the graphs in $\cR(3,8,24,e=63)$ as we did not store them, but we have $|\cR(3,8,24,e \leq 62)| = $ 14 645 288 701. See Section \ref{s:R3824e63} below for details on how to deal with $\cR(3,8,24,e = 63)$.

\subsection{A partial census of $\cR(3,9)$}
As explained in Proposition \ref{p:R39graphs} above, we need the following sets of graphs
\begin{enumerate}
 \item $\cR(3,9,32,e \leq 112)$ (1 554 928 360 graphs)
 \item $\cR(3,9,33,e \leq 121)$ (14 395 graphs)
 \item $\cR(3,9,34,e \leq 130)$ (5 graphs)
\end{enumerate}

In addition we considered the single graph in $\cR(3,9,35)$. This is not logically necessary, but we found it convenient to glue it in order to make an additional assumption about the minimum degree of any vertex for the remaining gluing operations.

Again, in each case it is a small linear programming exercise to show that gluing the above $R(3,8)$-graphs suffices to generate these $R(3,9)$-graphs.

We remark that $\cR(3,8,24,e=63)$ is only needed to generate the regular degree $7$ graphs in $\cR(3,9,32,e=112)$. For all the remaining graphs it suffices to consider $\cR(3,8,24,e \leq 62)$.

\section{Gluing algorithms}
The main algorithm used by Goedgebeur and Radziszowski is the Neighbourhood Gluing Extension Method. Similar algorithms are pervasive in the subject. We made a few modifications to increase the performance.

\subsection{The Neighbourhood Gluing Extension Method}
The basic algorithm is as follows. Given $G' \in \cR(3,t,n)$ and some natural number $d$, one can compute all graphs $G \in \cR(3,t+1,n+1+d,e \leq e_0)$ with a vertex $v$ of degree $d$ with neighbours $v_1,\ldots,v_d$ and dual neighbourhood $G'$ as follows. First, make a list of all independent sets in $G'$. If $S$ is an independent set in $G'$ then assigning $S \cup \{v\}$ as the set of neighbours of some $v_i$ does not create any triangles, and the only thing to check is if assigning $S_1,\ldots,S_d$ (and $v$) as the set of neighbours to $v_1,\ldots,v_d$ creates any independent $(t+1)$-sets. This is a valid assignment if and only if for each subset $K \subseteq \{1,\ldots,n\}$ the independence number of $VG' \setminus \bigcup\limits_{k \in K} S_k$ is at most $t-|K|$. (This is authomatic for $|K|=1$.) If so, we call $S_1,\ldots,S_d$ \emph{compatible}. Based on this one can implement an inductive search algorithm. By ordering the independent sets by size one can also prune when it is clear that adding independent sets from later in the list will produce a graph with too many edges.

We used their algorithm without change to compute the parts of $\cR(3,8,n)$ we needed, but we made some changes when computing $\cR(3,9,n)$ and when showing that $\cR(3,10,41)$ is empty.

\subsection{Our first modification: Using only maximal independent sets}
This modification is responsible for the largest performance increase. To describe it, we use the following result:

\begin{lemma}
If the independent sets $S_1,\ldots,S_d$ are compatible and $S_i \subseteq S_i'$ for each $i$ then the independent sets $S_1',\ldots,S_d'$ are also compatible.
\end{lemma}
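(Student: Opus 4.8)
The plan is to prove the contrapositive-free, direct monotonicity statement: enlarging each $S_i$ preserves compatibility. Recall from the definition that $S_1,\ldots,S_d$ are compatible precisely when for every subset $K \subseteq \{1,\ldots,d\}$ the independence number of $VG' \setminus \bigcup_{k \in K} S_k$ is at most $t - |K|$. So the entire lemma reduces to a single elementary observation about how this independence number behaves when the $S_i$ grow. First I would fix an arbitrary $K \subseteq \{1,\ldots,d\}$ and compare the two vertex sets $VG' \setminus \bigcup_{k \in K} S_k$ and $VG' \setminus \bigcup_{k \in K} S_k'$.

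The key step is the set inclusion. Since $S_k \subseteq S_k'$ for each $k$, we have $\bigcup_{k \in K} S_k \subseteq \bigcup_{k \in K} S_k'$, and therefore
\[
 VG' \setminus \bigcup_{k \in K} S_k' \subseteq VG' \setminus \bigcup_{k \in K} S_k.
\]
The independence number is monotone under taking induced subgraphs: if $A \subseteq B$ are vertex subsets, then any independent set contained in $A$ is also an independent set contained in $B$, so the independence number of the subgraph induced on $A$ is at most that induced on $B$. Applying this with $A = VG' \setminus \bigcup_{k \in K} S_k'$ and $B = VG' \setminus \bigcup_{k \in K} S_k$ gives that the independence number of the smaller set is bounded by that of the larger. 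By the compatibility hypothesis for $S_1,\ldots,S_d$, the latter is at most $t - |K|$, hence so is the former.

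Since $K$ was arbitrary, this verifies the compatibility condition for $S_1',\ldots,S_d'$ for every $K$, completing the proof. I would only need to add the routine remark that the $S_i'$ are still independent sets, which is part of the hypothesis, so there is nothing further to check. There is no real obstacle here: the lemma is essentially a formal consequence of monotonicity of the independence number under inclusion of induced subgraphs, and the only thing worth stating carefully is the direction of the inclusion (larger $S_k$ means a \emph{smaller} remaining vertex set, hence a smaller independence number). The one subtlety to flag is that the condition must be checked uniformly over all $K$, but since the inclusion and the monotonicity both hold for each fixed $K$ independently, the quantifier over $K$ causes no difficulty.
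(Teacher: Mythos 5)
Your proof is correct and rests on the same one-line monotonicity observation as the paper's: the paper phrases it as ``adding an edge (from $v_i$ to a vertex of $S_i' \setminus S_i$) does not introduce any additional independent sets,'' while you verify the equivalent statement directly from the definition, namely that enlarging each $S_k$ shrinks $VG' \setminus \bigcup_{k \in K} S_k$ and hence cannot increase its independence number. Your version is slightly more explicit about checking the condition for every $K$, but the substance is identical.
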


In other words, if we can add an edge from $v_i$ to $G'$ without introducing any triangles then we can add that same edge to $G$ without introducing any triangles. Adding an edge does not introduce any additional independent sets, so the lemma follows. Hence it suffices to consider independent sets that are \emph{maximal} in the sense that they are not contained in any larger independent sets. This has a massive advantage. In a typical case this reduces the number of independent sets by an order of magnitude, and the number of tuples by several orders of magnitude.

This modification also has some disadvantages. The first is that we can no longer prune by number of edges, at least not at this stage of the algorithm.

The second downside is that such a maximal solution might represent a large number of non-maximal solutions. So we need another algorithm to extract those non-maximal solutions.

We do this as follows: Before searching for compatible $d$-tuples of maximal independent sets, assign each independent set $S$ to a maximal independent set $S'$ with $S \subset S'$. This way each maximal independent set comes with an allowed list of subsets.

Given compatible maximal independent sets $S_1',\ldots,S_d'$, we inductively try replacing $S_i'$ by each allowed subset $S_i$ while pruning on the total number of edges.

\subsection{Our second modification: Not using a precomputed table of independence numbers}
Goedgebeur and Radziszowski used a precomputed table of the independence number of any subset of $VG'$. Precomputing this simply takes too long. If the number of vertices is small (at most 27) we initialise a table of independence numbers to $0$ and compute independence numbers as needed. If the number of vertices is large, we use a HashMap instead.

Note that there is a fast way to check if a pair of independent sets are compatible without using a precomputed table of independence numbers: For each independent set $S$ we precompute which independent $(t-1)$-sets are contained in $VG' \setminus S$ and store the result in a bitvector. Then we can use a bitwise \textit{and} operation to check if $S_i$ and $S_j$ are compatible. We then store this result for each pair in another bitvector, so we can check with another bitwise \textit{and} which independent sets are pairwise compatible with $S_1,\ldots,S_k$.

\subsection{Our third modification: Computing independence numbers as late as possible}
Suppose, for example, that we start with $G' \in \cR(3,8,24)$ and $d = 7$, and that we are looking for graphs in $\cR(3,9,32,e \leq 112)$. Then we are looking for compatible independent $7$-tuples $S_1,\ldots,S_7$ of maximal independent sets. Because bitwise operations are fast and computing the independence number of some subset of $VG'$ is slow, we check for pairwise compatibility only (which we compute once and then store in a bitvector) until we find a potential solution $S_1,\ldots,S_7$. Only at this point do we start computing independence numbers, starting with that of $VG' \setminus (S_1 \cup S_2 \cup S_3)$. If we do find that $\{S_1,S_2,S_3\}$ are incompatible, we can jump straight to the next assignment of $S_3$.

\subsection{Our fourth modification: Ordering the independent sets}
We order the maximal independent sets $S_1,\ldots,S_N$ as follows: First, we find the independent $(t-1)$-set which is contained in $VG' \setminus S_i$ for the largest number of maximal independent sets, and consider those maximal independent sets last. The reason for doing so is that none of them are compatible with each other, so we can choose at most one of them and they only need to be considered in the inner-most loop when choosing $S_d$. Hence we can work with a smaller collection of independent sets until the inner-most loop. This can be repeated.

When aiming for $\cR(3,10,41)$ this gives us an especially large saving, as we almost never have to consider the last 3 or 4 subsets of maximal independent sets.

\subsection{Performance}
For example, these modifications allow us to glue approximately 250 graphs in $\cR(3,8,24)$ per second (per CPU core) on a modern computer to produce graphs in $\cR(3,9,32,e \leq 112)$, and a similar number of graphs in $\cR(3,9,32)$ per second to produce (hypothetical) graphs in $\cR(3,10,41)$.

\section{The special case of extending $\cR(3,8,24,e=63)$ to $\cR(3,9,32,e=112)$} \label{s:R3824e63}
There are approximately 150 billion graphs in $\cR(3,8,24,e=63)$, and these are needed to find the regular degree $7$ graphs in $\cR(3,9,32,e=112)$ only. With the obvious algorithm this would take much longer than any of the other calculations, but we can take advantage of the fact that we only need them to find the regular degree $7$ graphs to produce a much more efficient algorithm.

If $\Gamma \in \cR(3,9,32,e=112)$ is regular of degree $7$ then \emph{every} vertex of $\Gamma$ has dual neighbourhood in $\cR(3,8,24,e=63)$.

Suppose in addition that the dual neighbourhood of some vertex $v$ of $\Gamma$ in $\cR(3,8,24,e=63)$ has a degree $7$ vertex $w$. That means the neighbourhoods of $v$ and $w$ in $\Gamma$ are disjoint, and the intersection of the dual neighbourhoods of $v$ and $w$ is a graph $G' \in \cR(3,7,16)$. Hence $\Gamma$ is given by gluing two graphs $G_1, G_2 \in \cR(3,8,63,e=112)$ along $G' \in \cR(3,7,16)$.

For a fixed $G' \in \cR(3,7,16)$ we first find all $G \in \cR(3,8,24,e=63)$ with $G'$ as a dual neighbourhood using the usual Neighbourhood Gluing Extension Method. For each pair $G_1, G_2 \in \cR(3,8,24,e=63)$ intersecting in $G'$ we can then compute the degree of each vertex $x \in VG'$ considered in $\Gamma$, and each must have degree exactly $7$. That is a very strong condition, so very few pairs are compatible. For a fixed $G'$ there might be several hundred million extensions to $\cR(3,8,24,e=63)$, so that leaves approximately $10^{17}$ pairs in that case. To avoid having to check every pair, we put the graphs in a HashMap based on the degree sequence of the vertices in $VG'$ considered in $G$. Then, for each $G_1 \in \cR(3,8,24,e=63)$ extending $G'$ we can simply look up the compatible $G_2$ in the HashMap.

After finding a pair that is compatible, we do the following: For each neighbour $v_i$ of $v$ and neighbour $w_j$ of $w$, we add an edge between $v_i$ and $w_j$ if this does not introduce a triangle. This can be checked with a single bitwise \textit{and} for each pair $(v_i,w_j)$. If every vertex now has degree at least 7, we check if this is an $R(3,9,32)$-graph. If it is, we then remove edges in all possible ways between degree $\geq 8$ vertices and record the $R(3,9,32)$-graphs we get.

A similar algorithm works for $G' \in \cR(3,7,17)$ or $G' \in \cR(3,7,18)$. The restriction on the degree of the vertices in $G'$ considered in $\Gamma$ is weaker, but we also have fewer pairs.

Because this has to work for \emph{every neighbourhood}, we can exclude any graph $G \in \cR(3,8,24,e=63)$ that has already been considered. It is possible to do this without storing $\cR(3,8,24,e=63)$, for example as follows:

We first ran this program for $G' \in \cR(3,7,16,e=24)$. Next, we ran it for $G' \in \cR(3,7,17,e=30)$ while excluding any output in $\cR(3,8,24,e=63)$ with some dual neighbourhood in $\cR(3,7,16,e=24)$. Next we considered $\cR(3,7,18,e=36)$ while excluding any output in $\cR(3,7,16,e=24)$ or $\cR(3,7,17,e=30)$. After that we considered $\cR(3,7,16,e=23)$ while excluding the graphs with a dual neighbourhood in one of the previous cases, and so on until covering all of $\cR(3,7,16,e \leq 24)$, $\cR(3,7,17,e \leq 30)$ and $\cR(3,7,18,e \leq 36)$. Finally we also considered $\cR(3,7,19,e \leq 40)$ (which is small and only required a simple one-point extender.)

This produced a total of $506$ regular degree $7$ graphs in $\cR(3,9,32,e=112)$, in about 6 months of CPU time.

\section{Extending the necessary $\cR(3,9)$-graphs}
In the end we had about 1.6 billion graphs in $\cR(3,9,32,e \leq 112)$, 14395 graphs in $\cR(3,9,33,e \leq 121)$ and 5 graphs in $\cR(3,9,34,e \leq 130)$. The latter two cases were very fast. The first case took about 3 months of CPU time to extend to $\cR(3,10,41)$. Since this produced no outputs, this finishes the proof of Theorem \ref{t:main}.

Since running a program that produces no output is not very satisfying we had our program produce a large number of graphs is $\cR(3,10,38)$ instead, and used a one-point extender to verify that none of them extended to $\cR(3,10,41)$ (or even to $\cR(3,10,40)$).

\section{A partial census of $\cR(3,10,39)$}
This project produced a large number of graphs in $\cR(3,10,39)$, and we were able to produce many more by forgetting a single vertex and then using a one-point extender in all possible ways. We have produced 39 745 077 such graphs, and we can still produce more pretty easily. Most of these had already been found by Goedgebeur and Radziszowski \cite{GoRa13} who found 43 117 868 such graphs. By taking the union (and forgetting a single vertex and using a one-point extender) we extended this to 43 146 537 graphs. In contrast, the number of maximal Ramsey graphs in other known cases is much smaller:

\begin{table}[h]
  \begin{center}
    \caption{The number of maximal Ramsey graphs in some cases}
    \label{tab:table1}
    \begin{tabular}{l|c|r} 
      $|\cR(3,5, 13)|$ & 1 \\
      \hline
      $|\cR(3,6,17)|$ & 7 \\
      \hline
      $|\cR(3,7,22)|$ & 191 \\
      \hline
      $|\cR(3,8,27)|$ & 477 142 \\
      \hline
      $|\cR(3,9,35)|$ & 1 \\
      \hline
      $|\cR(4,4,17)|$ & 1 \\
      \hline
      $|\cR(4,5,24)|$ & 352 366 \\
      \hline
      $|\cR(4,6,35)|$ & $\geq$ 37 \\
      \hline
      $|\cR(5,5,42)|$ & $\geq$ 656 \\
      \hline
      $|\cR(3,10,39)|$ & $\geq$ 43 146 537
    \end{tabular}
  \end{center}
\end{table}

The obvious conjecture is that $R(3,10) = 40$. But because $\cR(3,10,39)$ is so large, we are not confident in this prediction. If, indeed, $R(3,10) = 40$ then this is going to be quite difficult to prove with current techniques. For example, a potential $R(3,10,40)$-graph might be regular of degree $9$ and then the dual neighbourhood of any vertex will be in $\cR(3,9,30,e=99)$. This contains many orders of magnitude more graphs than $\cR(3,9,32,e \leq 112)$.


\end{document}